\documentclass[11pt]{article}
\usepackage{amsmath,amssymb,amsthm}
\usepackage{url}
\usepackage[margin=1in]{geometry}
\theoremstyle{plain}
\newtheorem{theorem}{Theorem}[section]
\newtheorem{lemma}[theorem]{Lemma}
\newtheorem{corollary}[theorem]{Corollary}
\newtheorem{proposition}[theorem]{Proposition}
\theoremstyle{definition}

\theoremstyle{remark}
\newtheorem{remark}[theorem]{Remark}
\newcommand{\PP}{\mathcal{P}}
\numberwithin{equation}{section}
\begin{document}

\title{A Symbolic Differentiation--Summation Framework for Power Sums of
Arithmetic Sequences}
\author{Assist.\ Lect.\ Ahmed Abdlmohsin Abdlsahib\\[2pt]\small Polytechnic College of Samawah,\\ Al-Furat Al-Awsat Technical University, Samawah, Iraq\\ ins.ahmd@atu.edu.iq}
\date{}
\maketitle

\begin{abstract}
We give a self-contained derivation of the closed form of the power sums
$S_m(n)=\sum_{i=1}^{n}(a_0+(i-1)d)^m$ in which the Bernoulli numbers enter as
constants of integration rather than as prescribed coefficients. The
derivation rests on an elementary fact: a sum whose terms are polynomial in
the index is the restriction to the integers of a unique polynomial, which
satisfies the defining difference equation identically and may therefore be
differentiated and integrated exactly. Differentiating lowers the power by
one; integrating back introduces a single constant, equal to the slope
$S_m'(0)$ of the polynomial at the origin and fixed by the value
$S_m(1)=a_0^m$. This yields a closed form for $S_m(n)$ in terms of the constants and a
recurrence that determines them, obtained without induction on the exponent
and without generating functions. With the Bernoulli numbers defined by their classical recurrence, we
identify the constants as $d^mB_m(a_0/d)$; the progression starting at $0$
reproduces that recurrence exactly. In
the normalized case they are $B_m(1)$, so $C_1=B_1(1)=+\tfrac12$, a value
that reflects the summation range; a reflection identity shows, without
generating functions, that the odd constants beyond the first vanish. The
coefficients of the recurrence are scaled binomial coefficients from a single
row of Pascal's triangle, and the same approach gives closed forms for
sequences with polynomial differences. The integral recursion behind the
method is known in the normalized case; each result is related to its
classical counterpart, and a verified implementation is provided.
\end{abstract}

\noindent\textbf{Keywords:} power sums; Bernoulli numbers; Bernoulli
polynomials; Faulhaber's formula; arithmetic progressions; polynomial
extension.

\section{Introduction}
\label{sec:intro}

Formulas for the power sums $\sum_{i=1}^{n}i^m$ have a long history.
Faulhaber, in his \emph{Academia Algebrae} (Augsburg, 1631), published
formulas for the sums of odd powers up to the seventeenth, expressed as
polynomials in $N=n(n+1)/2$, and believed that such polynomials exist for
every odd power; a rigorous proof of this assertion was first published by
Jacobi in 1834 \cite{Knuth}. Jacob Bernoulli's
\emph{Ars Conjectandi}, published posthumously in 1713, contains a table of
power sums together with the rule that generates its coefficients
\cite[pp.~97--98]{Bernoulli}. In modern form the coefficients are expressed
through the Bernoulli numbers and polynomials; for an arithmetic progression
with $d\ne0$,
\[
  \sum_{k=0}^{n-1}(a+dk)^m=\frac{d^m}{m+1}\Bigl(B_{m+1}\bigl(n+\tfrac ad\bigr)
  -B_{m+1}\bigl(\tfrac ad\bigr)\Bigr)
  \qquad\text{\cite[Eq.~(24.4.9)]{DLMF}},
\]
and the Euler--Maclaurin formula relates finite sums to integrals through
correction terms whose coefficients are Bernoulli numbers
\cite[Eq.~(2.10.1)]{DLMF}.

In most modern treatments the Bernoulli numbers enter as given
coefficients, defined beforehand by a generating function or by a
recurrence. A different viewpoint obtains them from the power sums
themselves: the integral recursion
$S_k(x)=k\int_0^xS_{k-1}(t)\,dt+\bigl(1-k\int_0^1S_{k-1}(t)\,dt\bigr)x$,
$S_0(x)=x$, for the normalized sums is classical
\cite{Cereceda,Coen}, and its linear coefficient is the Bernoulli number
$B_k(1)$. The purpose of this paper is to give this viewpoint a rigorous
foundation, to extend it to an arbitrary arithmetic progression, and to show
how the Bernoulli numbers arise as constants of integration when a power sum
is reconstructed from its derivative. The starting point is
elementary. A sum whose terms are polynomial in the index is the restriction
to the integers of a unique polynomial, and this polynomial satisfies the
defining difference equation $S(n+1)-S(n)=x(n+1)$ identically. That identity
can be differentiated: the derivative of a power sum is a power sum of one
degree less, plus a constant. Integrating back introduces exactly one
constant, which equals the slope $S'(0)$ of the polynomial at the origin and
is determined by the single value $S(1)$.

The contributions of the paper are the following.
\begin{enumerate}
\item[(i)] A justification for differentiating and integrating finite sums
with polynomial terms, based on the uniqueness of the polynomial extension
(Section~\ref{sec:why}). No sum with a variable upper limit is
differentiated, and no approximation is involved.
\item[(ii)] The Recursive Bernoulli Extraction Formula (RBEF)
(Section~\ref{sec:rbef}): a closed form for
$S_m(n)=\sum_{i=1}^{n}(a_0+(i-1)d)^m$ in terms of the constants
$C_m=S_m'(0)$ and a recurrence that determines them, derived without
induction on the exponent and without generating functions; and the
identification $C_m=d^mB_m(a_0/d)$, with the Bernoulli numbers defined by
their classical recurrence.
\item[(iii)] An explanation of the value $C_1=+\tfrac12$ in terms of the
summation range (Section~\ref{sec:rbef}), and a proof that the odd constants
vanish in the normalized case based on a reflection identity, without
generating functions (Section~\ref{sec:odd}).
\item[(iv)] A description of the coefficients of the recurrence as scaled
binomial coefficients (Section~\ref{sec:row}), a procedure for sequences
with polynomial differences (Section~\ref{sec:polydiff}), and a verified
implementation (Section~\ref{sec:verif} and Appendix~\ref{app:code}).
\end{enumerate}

The formulas obtained are classical, and the precise correspondences with the
literature are stated where they arise; in the normalized case the integral
recursion itself is known \cite{Cereceda,Coen}. What the paper adds is a
justification of the differentiation and integration of finite sums, the
extension to an arbitrary arithmetic progression, and derivations in which
each constant has a definite meaning: it is a Taylor coefficient of the
polynomial extension at the origin.

\section{Related Work}
\label{sec:related}

Faulhaber's formulas and their later history, including Jacobi's proof, are
discussed by Knuth \cite{Knuth}; Bernoulli's original treatment is in
\cite[pp.~96--98]{Bernoulli}. The standard identities for Bernoulli numbers and
polynomials used in this paper are collected in \cite[Ch.~24]{DLMF}.

A different line of work treats summation algorithmically. Gosper
introduced a decision procedure for indefinite hypergeometric summation that
finds a hypergeometric antidifference when one exists \cite{Gosper}.
Zeilberger turned creative telescoping (a term he attributes to van der
Poorten) into an algorithm that produces recurrences and proof certificates
for definite hypergeometric sums \cite{Zeilberger}.
Petkov\v{s}ek, Wilf and Zeilberger gave a systematic treatment of these
methods \cite[Chaps.~4--8]{AeqB}, and Schneider developed difference-field
algorithms, building on Karr's algorithm and implemented in the package
Sigma, for nested indefinite sums and
products, for deriving recurrences, and for verifying identities
\cite[Secs.~2, 4--6]{Schneider}. These algorithms operate on a given
summand; applied to $\sum_{i=1}^{n}i^m$ for a fixed $m$, Gosper's algorithm
returns the corresponding polynomial (we checked this with a computer-algebra
implementation for $m\le12$). The present approach has a different
aim: it produces one recursive structure valid for all $m$, in which the
Bernoulli numbers appear as integration constants.

Closest to the present paper is the integral recursion for the normalized
power sums. Cereceda \cite{Cereceda} describes it as well established and
credits it to Sherwood (1970) and Costa (2008), and Coen \cite[\S3.4]{Coen}
derives it following Johnson (1986). The present paper differs from these
works in the justification of Section~\ref{sec:why}, which applies to an
arbitrary polynomial summand, and in treating a general arithmetic
progression.

\section{Polynomial Extension of Finite Sums}
\label{sec:why}

A finite sum $S(n)=\sum_{i=1}^{n}x(i)$ is defined only for integers
$n\ge 0$; an expression such as $\sum_{i=1}^{t}x(i)$ with real $t$ is not
defined, and neither, a priori, are $dS/dn$ and $\int S\,dn$. When the terms
are polynomial in the index, the sum is the restriction to the integers of a
unique polynomial, which may be differentiated and integrated with exact
results at every integer. This section establishes these facts; they are the
basis of all subsequent constructions, and no sum with a variable upper
limit is differentiated anywhere in the paper.

Throughout, $\PP_k$ denotes the real polynomials of degree at most $k$, and
$(\Delta p)(t)=p(t+1)-p(t)$ is the forward difference.

\begin{lemma}\label{lem:kernel}
If $p$ is a polynomial and $\Delta p\equiv 0$, then $p$ is constant.
\end{lemma}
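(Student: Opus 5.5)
The plan is to reduce the statement to the fact that a nonzero polynomial has only finitely many roots. Suppose $p$ is a polynomial with $\Delta p\equiv 0$, so $p(t+1)=p(t)$ for every real $t$. Put $q(t)=p(t)-p(0)$. Then $q$ is again a polynomial, and $\Delta q=\Delta p\equiv 0$.

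The first step is to iterate the identity along the integers. From $q(0)=0$ and $q(t+1)=q(t)$, induction on $k$ gives $q(k)=0$ for every integer $k\ge 0$. Thus $q$ vanishes at infinitely many points.

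The second step invokes the root bound. A nonzero real polynomial of degree $k$ has at most $k$ real roots, since each root $r$ contributes a linear factor $t-r$ by the factor theorem. Hence $q$ is the zero polynomial, and $p\equiv p(0)$ is constant.

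There is no real obstacle. The only point that needs care is that the hypothesis $\Delta p\equiv0$ is an identity of polynomials, so it holds at every real $t$ and in particular along the integers. As a cross-check, I could instead compare leading coefficients. If $p(t)=c t^k+\dots$ with $k\ge1$ and $c\ne0$, then $\Delta p(t)=kc\,t^{k-1}+(\text{lower terms})$ by the binomial theorem. This is nonzero, so $\deg\Delta p=k-1$ whenever $k\ge 1$, which again forces $k=0$. I would present the root-counting argument as the main proof and perhaps note the degree count in a remark, since the degree fact will also be useful later for showing that $\Delta$ maps $\PP_k$ onto $\PP_{k-1}$.
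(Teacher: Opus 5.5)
Your proof is correct and follows essentially the same route as the paper: iterate $p(t+1)=p(t)$ along the integers to get $p(j)=p(0)$, then conclude that $p(t)-p(0)$ has infinitely many zeros and so vanishes identically. Your leading-coefficient cross-check is also valid, but the proof does not need it.
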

\begin{proof}
From $p(t+1)=p(t)$ for all $t$ we get $p(j)=p(0)$ for every integer $j$, so
the polynomial $p(t)-p(0)$ has infinitely many zeros and therefore vanishes
identically.
\end{proof}

\begin{lemma}\label{lem:onto}
For every $k\ge 0$, $\Delta$ maps $\PP_{k+1}$ onto $\PP_k$.
\end{lemma}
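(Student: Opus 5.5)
We will prove the lemma by linear algebra: compute the dimension of the kernel of $\Delta$ on $\PP_{k+1}$ and apply rank--nullity. Two facts are needed first.

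\emph{Step 1: $\Delta$ lowers degree.} We check that $\Delta$ maps $\PP_{k+1}$ into $\PP_k$. For $p(t)=\sum_{j=0}^{k+1}c_jt^j$, expanding $(t+1)^j-t^j$ by the binomial theorem gives
\[
  (t+1)^j-t^j=\sum_{r=0}^{j-1}\binom{j}{r}t^r ,
\]
a polynomial of degree exactly $j-1$ with leading coefficient $j$. By linearity, $\Delta p\in\PP_k$. Hence $\Delta$ restricts to a linear map $\PP_{k+1}\to\PP_k$ between spaces of dimensions $k+2$ and $k+1$.

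\emph{Step 2: the kernel is one-dimensional.} By Lemma~\ref{lem:kernel}, every $p$ with $\Delta p\equiv0$ is constant. Conversely, constants clearly lie in the kernel. So the kernel of $\Delta$ on $\PP_{k+1}$ is $\PP_0$, which has dimension $1$.

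\emph{Step 3: rank--nullity.} The rank--nullity theorem now gives
\[
  \dim\Delta(\PP_{k+1})=(k+2)-1=k+1=\dim\PP_k .
\]
So the image is a subspace of $\PP_k$ of full dimension, and therefore equals $\PP_k$. This proves the lemma.

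There is no real obstacle here. The only point needing care is the degree computation in Step 1, which guarantees that the image actually lands in $\PP_k$.

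\emph{Alternative: an explicit triangular argument.} This avoids rank--nullity and is constructive. By Step 1, the images $\Delta(t^{j+1})$ for $j=0,\dots,k$ have degree exactly $j$ with leading coefficient $j+1\neq0$. They are therefore triangular with respect to the monomial basis of $\PP_k$, and so they form a basis of $\PP_k$. Concretely, given $q\in\PP_k$, we can solve $\Delta p=q$ by induction on $\deg q$. Subtract $\frac{c}{j+1}\Delta(t^{j+1})$ to cancel the leading term $ct^j$ of $q$, then apply the induction hypothesis to the remainder, which has lower degree. This version may suit the paper better, since it mirrors the constructive spirit of the later sections.
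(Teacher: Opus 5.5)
Your proof is correct and is essentially the paper's own argument: you expand $(t+1)^j-t^j$ binomially to get $\Delta(\PP_{k+1})\subseteq\PP_k$, identify the kernel as the constants via Lemma~\ref{lem:kernel}, and conclude by rank--nullity that the image has dimension $(k+2)-1=k+1=\dim\PP_k$. Your triangular alternative is also valid, but the lemma does not need it.
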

\begin{proof}
For $j\ge1$, $\Delta t^{j}=(t+1)^j-t^j=\sum_{i=0}^{j-1}\binom{j}{i}t^{i}$ has
degree $j-1$, and $\Delta 1=0$; hence $\Delta(\PP_{k+1})\subseteq\PP_k$. The
map $\Delta:\PP_{k+1}\to\PP_k$ is linear, and by Lemma~\ref{lem:kernel} its
kernel consists of the constants, a space of dimension $1$. By the
rank--nullity theorem its image has dimension
$\dim\PP_{k+1}-1=(k+2)-1=k+1=\dim\PP_k$, so the map is onto.
\end{proof}

\begin{theorem}[Polynomial extension of a sum]\label{thm:ext}
Let $x\in\PP_k$ and $S(n)=\sum_{i=1}^{n}x(i)$ for integers $n\ge0$. There is
exactly one polynomial $\widehat S$ such that
\begin{equation}\label{eq:defS}
  \widehat S(t+1)-\widehat S(t)=x(t+1)\quad\text{for all real }t,
  \qquad \widehat S(0)=0 .
\end{equation}
It has degree at most $k+1$, it satisfies $\widehat S(n)=S(n)$ for every
integer $n\ge0$, and it is the only polynomial that agrees with $S$ at all
integers $n\ge0$.
\end{theorem}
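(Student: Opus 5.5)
The plan is to prove existence, then uniqueness, then agreement with $S$ on the integers, and finally uniqueness among all polynomials agreeing with $S$ there.

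\emph{Existence.} Set $y(t)=x(t+1)$. Since $x\in\PP_k$, the shift gives $y\in\PP_k$, because expanding $(t+1)^j$ does not raise the degree. By Lemma~\ref{lem:onto} there is a $q\in\PP_{k+1}$ with $\Delta q=y$. Put $\widehat S(t)=q(t)-q(0)$. Subtracting a constant does not change $\Delta$, so $\widehat S$ satisfies \eqref{eq:defS}, and $\widehat S\in\PP_{k+1}$.

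\emph{Uniqueness.} Suppose $P$ and $Q$ are any two polynomials satisfying \eqref{eq:defS}; no degree bound is assumed. Then $\Delta(P-Q)\equiv0$, so Lemma~\ref{lem:kernel} shows that $P-Q$ is constant. Since $P(0)=Q(0)=0$, the constant is zero and $P=Q$. In particular, the unique solution is the one constructed above, so it has degree at most $k+1$.

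\emph{Agreement on the integers.} Use induction on $n$. For the base case, $\widehat S(0)=0=S(0)$, since $S(0)$ is the empty sum. For the step, set $t=n$ in \eqref{eq:defS}:
\[
\widehat S(n+1)=\widehat S(n)+x(n+1)=S(n)+x(n+1)=S(n+1).
\]

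\emph{Uniqueness among all polynomials agreeing with $S$.} If a polynomial $R$ satisfies $R(n)=S(n)$ for all integers $n\ge0$, then $R-\widehat S$ vanishes at infinitely many points. Hence it is the zero polynomial, as in the proof of Lemma~\ref{lem:kernel}.

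The argument has no real obstacle. The one point needing care is in the existence step, where the shifted polynomial $x(t+1)$, not $x(t)$ itself, must be checked to lie in $\PP_k$ before Lemma~\ref{lem:onto} is applied.
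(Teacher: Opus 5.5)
Your proof is correct and follows the paper's argument essentially step for step: existence by applying Lemma~\ref{lem:onto} to $x(t+1)$ and subtracting the value at $0$, uniqueness via Lemma~\ref{lem:kernel}, agreement on the integers, and the infinitely-many-zeros argument for the last claim. The only cosmetic difference is that you prove agreement by induction on $n$, whereas the paper telescopes the same identity $\widehat S(i)-\widehat S(i-1)=x(i)$.
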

\begin{proof}
\emph{Existence.} The polynomial $y(t)=x(t+1)$ lies in $\PP_k$, so by
Lemma~\ref{lem:onto} there is $Q\in\PP_{k+1}$ with $\Delta Q=y$. Then
$\widehat S=Q-Q(0)$ satisfies \eqref{eq:defS}.

\emph{Uniqueness.} If two polynomials satisfy \eqref{eq:defS}, their
difference has zero forward difference, hence is constant by
Lemma~\ref{lem:kernel}, and it vanishes at $0$.

\emph{Agreement with the sum.} Setting $t=i-1$ in \eqref{eq:defS} gives
$\widehat S(i)-\widehat S(i-1)=x(i)$. Summing over $i=1,\dots,n$, the left
side telescopes:
$\widehat S(n)=\widehat S(0)+\sum_{i=1}^{n}x(i)=S(n)$.

\emph{Only extension.} A polynomial that agrees with $S$ at all integers
$n\ge 0$ differs from $\widehat S$ by a polynomial with infinitely many
zeros.
\end{proof}

\begin{remark}\label{rem:why}
(i) Uniqueness holds only among polynomials. The function
$\widehat S(t)+\sin(\pi t)$ is analytic and agrees with $S$ at every
integer, but its derivative differs from $\widehat S'$. This is why the
extension must be the polynomial one, and why differentiating an identity
that is known only at the integers is not legitimate.

(ii) $\widehat S$ is defined by the identity \eqref{eq:defS}, which holds for
all real $t$; it is not defined as a sum with a variable upper limit. The
``moving boundary'' therefore never arises.

(iii) The condition $\widehat S(0)=0$ is not an arbitrary normalization:
the empty sum $S(0)$ equals $0$, and $\widehat S$ must agree with $S$ at
$n=0$.
\end{remark}

In what follows we write $S$ for $\widehat S$, and, for a polynomial $y$,
$\Sigma[y]$ for the polynomial extension of $n\mapsto\sum_{i=1}^{n}y(i)$. By
uniqueness, $\Sigma$ is linear:
$\Sigma[\alpha y+\beta z]=\alpha\Sigma[y]+\beta\Sigma[z]$.

\begin{theorem}[Differentiation]\label{thm:diff}
Let $x$ be a polynomial and $S=\Sigma[x]$. For every $r\ge1$,
\begin{equation}\label{eq:DI}
  S^{(r)}(t)=\Sigma\bigl[x^{(r)}\bigr](t)+S^{(r)}(0)\qquad\text{for all real }t .
\end{equation}
\end{theorem}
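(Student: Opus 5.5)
The plan is to differentiate the defining identity \eqref{eq:defS} and then invoke the uniqueness part of Theorem~\ref{thm:ext}. Since $S$ is a polynomial and \eqref{eq:defS} holds for all real $t$, both sides may be differentiated $r$ times, which gives
\[
  S^{(r)}(t+1)-S^{(r)}(t)=x^{(r)}(t+1)\qquad\text{for all real }t .
\]
This is legitimate precisely because the identity is a polynomial identity on all of $\mathbb{R}$, not merely at the integers (Remark~\ref{rem:why}). Thus the polynomial $S^{(r)}$ satisfies the same difference equation as $\Sigma[x^{(r)}]$, but it need not vanish at $0$.

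Next I would set $T(t)=S^{(r)}(t)-S^{(r)}(0)$. It satisfies $T(t+1)-T(t)=x^{(r)}(t+1)$ for all real $t$, because subtracting a constant does not change forward differences, and it satisfies $T(0)=0$. The polynomial $x^{(r)}$ lies in some $\PP_k$; if it is the zero polynomial, then $\Sigma[0]=0$ and the argument is unchanged. By the uniqueness clause of Theorem~\ref{thm:ext} applied to the summand $x^{(r)}$, we get $T=\Sigma[x^{(r)}]$, and rearranging gives \eqref{eq:DI}.

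An alternative route is induction on $r$ via the case $r=1$ together with linearity of $\Sigma$. It is not needed, since the direct argument works for every $r$ at once.

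There is no serious obstacle. The only point needing care is that the differentiation is applied to the identity holding for all real $t$, and that Theorem~\ref{thm:ext} characterizes $\Sigma[y]$ by the difference equation plus the normalization at $0$. That is exactly what lets the constant $S^{(r)}(0)$ absorb the mismatch in normalization.
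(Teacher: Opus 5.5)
Your proof is correct and essentially the paper's. The paper differentiates \eqref{eq:defS} $r$ times and applies Lemma~\ref{lem:kernel} to $S^{(r)}-\Sigma[x^{(r)}]$, evaluating the resulting constant at $0$; you instead apply the uniqueness clause of Theorem~\ref{thm:ext} to $S^{(r)}-S^{(r)}(0)$, and that clause is itself proved from Lemma~\ref{lem:kernel}, so the two arguments coincide.
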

\begin{proof}
Since \eqref{eq:defS} is an identity between polynomials, it may be
differentiated $r$ times:
$S^{(r)}(t+1)-S^{(r)}(t)=x^{(r)}(t+1)$. The polynomial
$U=\Sigma[x^{(r)}]$ satisfies the same relation. Hence $\Delta(S^{(r)}-U)=0$,
so $S^{(r)}-U$ is constant by Lemma~\ref{lem:kernel}; evaluating at $t=0$,
where $U(0)=0$, gives the constant $S^{(r)}(0)$.
\end{proof}

\begin{theorem}[Integration: reconstruction of the sum]\label{thm:int}
Let $x$ be a polynomial and $U=\Sigma[x']$. Then for every integer $n\ge0$
\begin{equation}\label{eq:recon}
  \sum_{i=1}^{n}x(i)=\int_{0}^{n}U(t)\,dt+c\,n,
  \qquad
  c=x(1)-\int_{0}^{1}U(t)\,dt .
\end{equation}
Moreover $c=S'(0)$.
\end{theorem}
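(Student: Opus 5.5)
The plan is to apply Theorem~\ref{thm:diff} with $r=1$ to $S=\Sigma[x]$. This gives the identity $S'(t)=U(t)+S'(0)$ for all real $t$, where $U=\Sigma[x']$. Both sides are polynomials, so we may integrate from $0$ to $t$. Since $S(0)=0$ by \eqref{eq:defS}, this yields
\[
  S(t)=\int_0^t U(s)\,ds+S'(0)\,t\qquad\text{for all real }t .
\]
Theorem~\ref{thm:ext} gives $S(n)=\sum_{i=1}^{n}x(i)$ for every integer $n\ge0$. The reconstruction formula \eqref{eq:recon} therefore holds with the constant $S'(0)$ in place of $c$.

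It then remains to show that $S'(0)$ coincides with the stated expression $x(1)-\int_0^1U(t)\,dt$. For this I will evaluate the displayed identity at $t=1$. By Theorem~\ref{thm:ext}, $S(1)=x(1)$; equivalently, \eqref{eq:defS} at $t=0$ gives $S(1)-S(0)=x(1)$. Hence $x(1)=\int_0^1U(t)\,dt+S'(0)$, which rearranges to $S'(0)=c$. This establishes both the formula and the final claim $c=S'(0)$ at once.

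I do not expect a genuine obstacle here; the main point to be careful about is legitimacy. The integration is performed on the polynomial identity supplied by Theorem~\ref{thm:diff}, which holds for all real $t$, and not on any sum with a variable upper limit. The fundamental theorem of calculus therefore applies without qualification. The only other thing to watch is that the normalization $S(0)=0$ is used to fix the constant of integration. Then the single value $S(1)=x(1)$ pins down the remaining linear coefficient, so exactly one undetermined constant appears.
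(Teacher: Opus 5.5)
Your proposal is correct and follows the paper's proof essentially step for step. You apply Theorem~\ref{thm:diff} with $r=1$, integrate the polynomial identity from $0$ using $S(0)=0$, evaluate at $1$ with $S(1)=x(1)$ to identify $c=S'(0)$, and invoke Theorem~\ref{thm:ext} for agreement with the sum at the integers.
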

\begin{proof}
By the fundamental theorem of calculus applied to the polynomial $S$,
$S(n)=S(0)+\int_0^n S'(t)\,dt$. By Theorem~\ref{thm:diff} with $r=1$,
$S'=U+S'(0)$, and $S(0)=0$. Hence $S(n)=\int_0^n U(t)\,dt+S'(0)\,n$ for all
real $n$. Taking $n=1$ and using $S(1)=x(1)$ gives the stated value of
$c=S'(0)$. Finally $S(n)=\sum_{i=1}^{n}x(i)$ at every integer $n\ge0$ by
Theorem~\ref{thm:ext}.
\end{proof}

\begin{remark}
Theorem~\ref{thm:int} justifies the use of integration throughout the
paper. The integral does not approximate the sum (indeed
$\int_0^n t\,dt\ne\sum_{i=1}^{n}i$). What is integrated is the derivative of
the polynomial that coincides with the sum at the integers, and on
polynomials integration is exact. For polynomial summands the Euler--Maclaurin formula is
also exact, since its remainder vanishes; the difference is that there the
Bernoulli numbers enter as given coefficients of the correction terms,
whereas here the single constant $c$ is determined by the value $S(1)$. The constant of integration is
$c=S'(0)$, the slope of the polynomial extension at the origin, and it is
determined by the single value $S(1)=x(1)$.
\end{remark}

\begin{corollary}[Power sums of an arithmetic progression]\label{cor:power}
Let $S_m(n)=\sum_{i=1}^{n}\bigl(a_0+d(i-1)\bigr)^m$, with $S_0(n)=n$. For
every $m\ge1$,
\begin{equation}\label{eq:RBEFint}
  S_m(n)=m\,d\int_0^{n}S_{m-1}(t)\,dt+C_m\,n,
  \qquad
  C_m=a_0^{m}-m\,d\int_0^{1}S_{m-1}(t)\,dt=S_m'(0).
\end{equation}
\end{corollary}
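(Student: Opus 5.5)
The plan is to apply Theorem~\ref{thm:int} to the summand $x(t)=\bigl(a_0+d(t-1)\bigr)^m$, which lies in $\PP_m$. Its derivative is $x'(t)=m\,d\,\bigl(a_0+d(t-1)\bigr)^{m-1}$. By the linearity of $\Sigma$ noted after Theorem~\ref{thm:ext}, this gives
\[
U=\Sigma[x']=m\,d\,\Sigma\bigl[(a_0+d(\cdot-1))^{m-1}\bigr]=m\,d\,S_{m-1},
\]
where $S_{m-1}$ denotes the polynomial extension. For $m=1$ the inner summand is the constant $1$; its extension is $\Sigma[1](t)=t$, which matches the convention $S_0(n)=n$. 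For $m\ge2$ it is the polynomial extension of $S_{m-1}$ in the sense of Theorem~\ref{thm:ext}.

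Substituting $U=m\,d\,S_{m-1}$ into \eqref{eq:recon} gives $S_m(n)=m\,d\int_0^nS_{m-1}(t)\,dt+c\,n$ with $c=x(1)-m\,d\int_0^1S_{m-1}(t)\,dt$. Since $x(1)=a_0^m$, this is exactly the stated form of $C_m$. The identification $c=S_m'(0)$ is the last assertion of Theorem~\ref{thm:int}. Because the proof of that theorem gives the identity for all real $n$, the formula also holds as a polynomial identity, consistent with writing $S_m$ for its extension.

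The only point needing care, and the expected obstacle, is the bookkeeping between sums and their extensions. First, the $S_{m-1}$ inside the integral must be the polynomial $\Sigma$-extension, not the integer-defined sum. Theorem~\ref{thm:ext} guarantees that this extension is unique, so the integral is well defined. Second, the convention $S_0(n)=n$ must agree with $\Sigma[1]$, so that the case $m=1$ is covered, including when $a_0=0$ and $0^0$ might otherwise be ambiguous. Finally, $d=0$ is harmless: then $U=0$, and the formula reduces to $S_m(n)=a_0^m n$.
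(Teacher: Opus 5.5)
Your proposal is correct and is essentially the paper's proof: apply Theorem~\ref{thm:int} to $x(t)=(a_0+d(t-1))^m$, use linearity of $\Sigma$ to get $U=m\,d\,S_{m-1}$, and read off $x(1)=a_0^m$. Your extra remarks on the case $m=1$ (the convention $0^0=1$) and on $d=0$ are sound additions that the paper leaves implicit.
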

\begin{proof}
Apply Theorem~\ref{thm:int} to $x(t)=(a_0+d(t-1))^m$. Then
$x'(t)=m\,d\,(a_0+d(t-1))^{m-1}$, so $U=\Sigma[x']=m\,d\,S_{m-1}$, and
$x(1)=a_0^m$.
\end{proof}

\begin{remark}
Equation \eqref{eq:RBEFint} is the RBEF in integral form: starting from
$S_0(n)=n$ it produces $S_1,S_2,\dots$ and the constants $C_1,C_2,\dots$
one after another, each step requiring one polynomial integration and one
evaluation at $n=1$. Inserting the closed form of $S_{m-1}$ into the
integral $\int_0^1 S_{m-1}(t)\,dt$ yields the recurrence for $C_m$ in
Corollary~\ref{cor:rbef}. For example, with $a_0=d=1$ and $m=2$: $S_1(t)=t(t+1)/2$,
$\int_0^1 S_1=5/12$, hence $C_2=1-2\cdot\tfrac{5}{12}=\tfrac16$ and
$S_2(n)=\tfrac{n^3}{3}+\tfrac{n^2}{2}+\tfrac{n}{6}$.
\end{remark}

\section{The Recursive Bernoulli Extraction Formula}
\label{sec:rbef}

Throughout this section $S_m(n)=\sum_{i=1}^{n}\bigl(a_0+d(i-1)\bigr)^m$ for
$m\ge0$, with the convention $0^0=1$ (so $S_0(n)=n$), always identified with its polynomial extension
(Theorem~\ref{thm:ext}), and
\begin{equation}\label{eq:Cdef}
  C_m:=S_m'(0)\qquad(m\ge 0),\qquad C_0=1 .
\end{equation}
By Theorem~\ref{thm:int}, $C_m$ is the constant of integration that appears
when $S_m$ is reconstructed from its derivative.

\subsection{The first cases}

Corollary~\ref{cor:power} gives $S_m(n)=m\,d\int_0^nS_{m-1}(t)\,dt+C_m n$,
with $C_m$ fixed by $S_m(1)=a_0^m$. Starting from $S_0(n)=n$:
\begin{align}
S_1(n)&=\frac{d\,n^2}{2}+C_1n, & C_1&=a_0-\frac d2, \label{eq:S1}\\
S_2(n)&=\frac{d^2n^3}{3}+d\,C_1n^2+C_2n, & C_2&=a_0^2-\frac{d^2}{3}-d\,C_1, \label{eq:S2}\\
S_3(n)&=\frac{d^3n^4}{4}+d^2C_1n^3+\frac32 d\,C_2n^2+C_3n, &
C_3&=a_0^3-\frac{d^3}{4}-d^2C_1-\frac32 d\,C_2. \label{eq:S3}
\end{align}
For $a_0=d=1$ these give $C_1=\tfrac12$, $C_2=\tfrac16$, $C_3=0$ and the
classical formulas $\tfrac{n(n+1)}{2}$, $\tfrac{n(n+1)(2n+1)}{6}$,
$\bigl(\tfrac{n(n+1)}{2}\bigr)^2$.

\subsection{The general closed form}

\begin{theorem}[Closed form]\label{thm:closed}
For every $m\ge1$,
\begin{equation}\label{eq:closed}
  S_m(n)=\frac{1}{m+1}\sum_{j=0}^{m}\binom{m+1}{j}\,d^{\,m-j}\,C_j\,n^{\,m+1-j}.
\end{equation}
Equivalently, with $A(m,k)=\frac{1}{k+1}\binom{m}{k}$,
\begin{equation}\label{eq:closedA}
  S_m(n)=\frac{d^m n^{m+1}}{m+1}+d^{m-1}C_1n^m
  +\sum_{k=1}^{m-1}A(m,k)\,d^{\,m-k-1}\,C_{k+1}\,n^{m-k}.
\end{equation}
\end{theorem}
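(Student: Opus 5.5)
We prove \eqref{eq:closed} by induction on $m$, using only the integral form of the RBEF, Corollary~\ref{cor:power}; no property of the constants $C_j$ is needed beyond their definition \eqref{eq:Cdef}. Note that the paper advertises the derivation as free of induction on the exponent. What is meant is that no \emph{guessed} formula is verified; the induction below is simply the unrolling of the recursion \eqref{eq:RBEFint}. The base case $m=1$ is \eqref{eq:S1}: the right side of \eqref{eq:closed} is $\frac12\bigl(d\,C_0n^2+2C_1n\bigr)=\frac{d n^2}{2}+C_1 n$, since $C_0=1$. It is convenient to check that the formula also holds for $m=0$. There the sum has the single term $j=0$, giving $\binom{1}{0}d^0C_0n=n=S_0(n)$, so the induction may start at $m=0$ if preferred.

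For the inductive step, assume \eqref{eq:closed} for $m-1$:
$S_{m-1}(t)=\frac1m\sum_{j=0}^{m-1}\binom{m}{j}d^{m-1-j}C_jt^{m-j}$.
Insert this into \eqref{eq:RBEFint} and integrate termwise from $0$ to $n$. This is exact, because $S_{m-1}$ is a polynomial by Theorem~\ref{thm:ext}. It gives
\[
S_m(n)=\sum_{j=0}^{m-1}\binom{m}{j}\frac{d^{\,m-j}C_j}{m+1-j}\,n^{m+1-j}+C_mn .
\]
The key identity is $\binom{m}{j}\frac{1}{m+1-j}=\frac{1}{m+1}\binom{m+1}{j}$. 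It follows from $\binom{m+1}{j}=\frac{m+1}{m+1-j}\binom{m}{j}$. With it, each term with $j\le m-1$ matches the corresponding term of \eqref{eq:closed}. The term $j=m$ of \eqref{eq:closed} is $\frac{1}{m+1}\binom{m+1}{m}d^0C_mn=C_mn$, which is exactly the constant-of-integration term. This completes the induction. The values of $C_m$ are not used anywhere, so the formula holds for whatever constants Corollary~\ref{cor:power} produces.

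For \eqref{eq:closedA}, we reindex \eqref{eq:closed}. The term $j=0$ gives $\frac{d^mn^{m+1}}{m+1}$ because $C_0=1$. The term $j=1$ gives $\frac{m+1}{m+1}d^{m-1}C_1n^m$. For $j\ge2$, put $j=k+1$ with $1\le k\le m-1$ and use
\[
\frac{1}{m+1}\binom{m+1}{k+1}=\frac{1}{k+1}\binom{m}{k}=A(m,k),
\]
which again follows from the absorption identity $\binom{m+1}{k+1}=\frac{m+1}{k+1}\binom{m}{k}$. The powers then become $d^{m-k-1}$ and $n^{m-k}$, as stated. None of these steps is a real obstacle; the only care needed is the boundary bookkeeping, namely the separate treatment of the $j=m$ term and the checks that $C_0=1$ and that the reindexing ranges line up.
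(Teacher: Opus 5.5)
Your proof is correct, but it takes a different route from the paper's.

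\textbf{Your route.} You unroll the integral recursion \eqref{eq:RBEFint} by induction on $m$. You insert the closed form of $S_{m-1}$, integrate termwise, and convert $\binom{m}{j}/(m+1-j)$ into $\binom{m+1}{j}/(m+1)$. The $j=m$ term is supplied by the integration constant $C_m n$.

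\textbf{The paper's route.} The paper expands $S_m$ in its Taylor series at $0$, which is legitimate because $S_m$ is a polynomial of degree at most $m+1$ with $S_m(0)=0$. It then computes every coefficient directly. Theorem~\ref{thm:diff} of order $r-1$, applied to $x(t)=(a_0+d(t-1))^m$, gives $S_m^{(r-1)}=d^{\,r-1}\frac{m!}{(m-r+1)!}S_{m-r+1}+\text{const}$. One more differentiation at $t=0$ yields $S_m^{(r)}(0)=d^{\,r-1}\frac{m!}{(m-r+1)!}C_{m-r+1}$. Hence the coefficient of $n^r$ is $\frac{1}{m+1}\binom{m+1}{r}d^{\,r-1}C_{m+1-r}$.

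\textbf{Your reading of the paper's claim.} You interpret ``no induction on the exponent'' as ``no guessed formula is verified.'' That reading is off: the paper's argument genuinely contains no induction on $m$. The price is that it needs Theorem~\ref{thm:diff} at every order $r\le m$. Your argument needs only the first-order statement (Theorem~\ref{thm:int} via Corollary~\ref{cor:power}), together with the closed forms for all smaller exponents.

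\textbf{What each buys.} The paper's route identifies each coefficient of $S_m$ on its own as a Taylor coefficient $S_m^{(r)}(0)/r!$, for each $m$ separately. Yours is literally the mechanism that produced \eqref{eq:S1}--\eqref{eq:S3}, with only the absorption identity as bookkeeping. Your reindexing to \eqref{eq:closedA} is the same as the paper's.
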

\begin{proof}
$S_m$ is a polynomial of degree at most $m+1$ with $S_m(0)=0$, so it equals
its Taylor expansion at $0$:
$S_m(n)=\sum_{r=1}^{m+1}S_m^{(r)}(0)\,n^r/r!$.
We compute each coefficient directly. For $x(t)=(a_0+d(t-1))^m$ and
$2\le r\le m+1$ we have
$x^{(r-1)}(t)=d^{\,r-1}\frac{m!}{(m-r+1)!}\,(a_0+d(t-1))^{m-r+1}$, so
Theorem~\ref{thm:diff} (of order $r-1$) gives
\[
  S_m^{(r-1)}(t)=d^{\,r-1}\frac{m!}{(m-r+1)!}\,S_{m-r+1}(t)+\text{const}.
\]
Differentiating once and setting $t=0$,
\[
  S_m^{(r)}(0)=d^{\,r-1}\frac{m!}{(m-r+1)!}\,C_{m-r+1},
\]
and this also holds for $r=1$ by \eqref{eq:Cdef}. Hence the coefficient of
$n^r$ is $d^{\,r-1}\frac{m!}{r!\,(m+1-r)!}C_{m+1-r}
=\frac{1}{m+1}\binom{m+1}{r}d^{\,r-1}C_{m+1-r}$. Writing $j=m+1-r$ gives
\eqref{eq:closed}. Since $\frac{1}{m+1}\binom{m+1}{k+1}=A(m,k)$, the terms
$j=0$, $j=1$ and $j=k+1$ ($1\le k\le m-1$) of \eqref{eq:closed} are the
three parts of \eqref{eq:closedA}.
\end{proof}

\begin{remark}
The proof uses Theorem~\ref{thm:diff} at every order $r\le m$; this is where
the identity ``for all orders'' is needed. No induction on $m$ is used: each
Taylor coefficient is computed directly from Theorem~\ref{thm:diff}.
\end{remark}

\subsection{The recurrence}

\begin{corollary}[RBEF]\label{cor:rbef}
For every $m\ge1$,
\begin{equation}\label{eq:rec-compact}
  \sum_{j=0}^{m}\binom{m+1}{j}\,d^{\,m-j}\,C_j=(m+1)\,a_0^m ,
\end{equation}
and for every $m\ge2$, with $C_1=a_0-\tfrac d2$,
\begin{equation}\label{eq:rec}
  C_m=a_0^m-\frac{d^m}{m+1}-d^{m-1}C_1
      -\sum_{k=1}^{m-2}A(m,k)\,d^{\,m-k-1}\,C_{k+1}.
\end{equation}
\end{corollary}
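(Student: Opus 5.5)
The plan is to evaluate the closed form \eqref{eq:closed} of Theorem~\ref{thm:closed} at $n=1$ and use the defining value $S_m(1)=a_0^m$. Since $S_m$ is identified with its polynomial extension, which agrees with the sum at every integer $n\ge0$ (Theorem~\ref{thm:ext}), we have $S_m(1)=\bigl(a_0+d\cdot 0\bigr)^m=a_0^m$. Setting $n=1$ in \eqref{eq:closed}, every power $n^{m+1-j}$ becomes $1$. Multiplying by $m+1$ gives
\[
  \sum_{j=0}^{m}\binom{m+1}{j}d^{\,m-j}C_j=(m+1)a_0^m,
\]
which is \eqref{eq:rec-compact}. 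This holds for every $m\ge1$, because Theorem~\ref{thm:closed} does.

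For \eqref{eq:rec} I would instead set $n=1$ in the equivalent form \eqref{eq:closedA}:
\[
  a_0^m=\frac{d^m}{m+1}+d^{m-1}C_1+\sum_{k=1}^{m-1}A(m,k)\,d^{\,m-k-1}C_{k+1}.
\]
Next I isolate the top term $k=m-1$ of the sum. Its coefficient is $A(m,m-1)=\frac1m\binom{m}{m-1}=1$ and its power of $d$ is $d^0$, so the term is exactly $C_m$. This requires $m-1\ge1$, that is $m\ge2$, which is why \eqref{eq:rec} is stated only for those $m$. Moving everything else to the right-hand side yields \eqref{eq:rec}, where the remaining sum runs over $1\le k\le m-2$ and is empty when $m=2$.

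The value of $C_1$ comes from \eqref{eq:rec-compact} with $m=1$. Using $C_0=1$ this reads $d+2C_1=2a_0$, so $C_1=a_0-\tfrac d2$, in agreement with \eqref{eq:S1}.

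There is no real obstacle here; the only points requiring care are bookkeeping. First, $C_0=1$ is consistent with the $j=0$ term: it is $S_0'(0)=1$ for $S_0(n)=n$, and it matches the leading coefficient $d^m/(m+1)$. Second, the index shift $j=k+1$ between the two forms must be tracked correctly. Third, the boundary case $m=2$, where the residual sum in \eqref{eq:rec} is empty, should be checked explicitly against \eqref{eq:S2}.
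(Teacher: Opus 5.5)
Your proposal is correct and follows the paper's own proof. You set $n=1$ in \eqref{eq:closed} with $S_m(1)=a_0^m$ to get \eqref{eq:rec-compact}, isolate $C_m$ for $m\ge2$, and read off $C_1=a_0-\tfrac d2$ from the case $m=1$. The only cosmetic difference is that you isolate $C_m$ directly in \eqref{eq:closedA}, where its coefficient $A(m,m-1)=1$ is already normalized, whereas the paper divides the coefficient $m+1$ of the $j=m$ term out of \eqref{eq:rec-compact}.
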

\begin{proof}
Set $n=1$ in \eqref{eq:closed} and use $S_m(1)=a_0^m$; this is
\eqref{eq:rec-compact}. For $m\ge2$ the terms $j=1$ and $j=m$ are distinct;
the term $j=m$ has coefficient $\binom{m+1}{m}=m+1$, and solving for $C_m$
and rewriting the remaining terms as in \eqref{eq:closedA} gives
\eqref{eq:rec}. (For $m=1$ the two terms coincide, and
\eqref{eq:rec-compact} gives $C_1=a_0-\tfrac d2$ directly.)
\end{proof}

Since the coefficient of $C_m$ in \eqref{eq:rec-compact} is $m+1\ne0$,
the recurrence determines $C_1,C_2,\dots$ uniquely, one after another; that
its solution is $S_m'(0)$ is Theorem~\ref{thm:closed}, which holds for each
$m$ separately. The constants are thus computed from $C_0=1$ by
\eqref{eq:rec-compact}, from $C_1=a_0-\tfrac d2$ by \eqref{eq:rec}, or
equivalently by the integral form
$C_m=a_0^m-m\,d\int_0^1S_{m-1}(t)\,dt$ of Corollary~\ref{cor:power}. No
generating function and no prior knowledge of Bernoulli numbers is used.
In the normalized case $a_0=d=1$, \eqref{eq:rec-compact} reads
$\sum_{j=0}^{m}\binom{m+1}{j}C_j=m+1$.

\subsection{Identification of the constants}

To identify the constants with Bernoulli numbers one must fix a definition
of the latter. We use the classical recurrence
\begin{equation}\label{eq:Bdef}
  B_0=1,\qquad \sum_{k=0}^{n-1}\binom nk B_k=0\quad(n\ge2),
\end{equation}
which is \cite[Eq.~(24.5.1)]{DLMF} at $x=0$ and gives
$B_1=-\tfrac12$, $B_2=\tfrac16$, $B_3=0,\dots$, together with the Bernoulli
polynomials $B_m(x)=\sum_{k=0}^{m}\binom mk B_kx^{m-k}$
\cite[Eq.~(24.2.5)]{DLMF}. These definitions are equivalent to the
generating-function definition \cite[\S24.2]{DLMF}, which is not needed for
any result of this paper. We write $C_m(a_0,d)$ for the constants of the
progression with first term $a_0$ and difference $d$.

\begin{theorem}\label{thm:ident}
For every $m\ge0$:
\begin{enumerate}
\item[(i)] $C_m(0,1)=B_m$;
\item[(ii)] for all $a_0$ and $d$,
$C_m(a_0,d)=\sum_{k=0}^{m}\binom mk a_0^{m-k}d^kB_k$, which equals
$d^m\,B_m\!\left(\frac{a_0}{d}\right)$ when $d\ne0$.
\end{enumerate}
\end{theorem}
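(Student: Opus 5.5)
Both parts rest on the fact, noted after Corollary~\ref{cor:rbef}, that the recurrence \eqref{eq:rec-compact} determines $C_1,C_2,\dots$ uniquely from $C_0=1$. The coefficient of $C_m$ there is $m+1\neq0$. It therefore suffices to show that the proposed values satisfy \eqref{eq:rec-compact} for every $m\ge1$ and take the value $1$ at $m=0$; strong induction on $m$ then gives equality with $C_m(a_0,d)$.

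For (i), take $a_0=0$, $d=1$. With the convention $0^0=1$, the right side of \eqref{eq:rec-compact} is $(m+1)\cdot 0^m$, which is $0$ for $m\ge1$. The recurrence then reads $\sum_{j=0}^{m}\binom{m+1}{j}C_j=0$ for $m\ge1$, with $C_0=1$. Putting $n=m+1\ge2$, this is exactly \eqref{eq:Bdef}. Both sequences start at $1$ and satisfy the same uniquely solvable recurrence, so $C_m(0,1)=B_m$.

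For (ii), set $D_m=\sum_{k}\binom mk a_0^{m-k}d^kB_k$, so that $D_0=1$. I substitute $D_j$ into the left side of \eqref{eq:rec-compact}:
\[
\sum_{j=0}^{m}\binom{m+1}{j}d^{m-j}\sum_{k=0}^{j}\binom jk a_0^{j-k}d^kB_k .
\]
Using $\binom{m+1}{j}\binom jk=\binom{m+1}{k}\binom{m+1-k}{j-k}$, I swap the sums and put $\ell=j-k$. This gives
\[
\sum_{k=0}^{m}\binom{m+1}{k}d^kB_k\sum_{\ell=0}^{m-k}\binom{m+1-k}{\ell}a_0^{\ell}d^{m-k-\ell}.
\]
The inner sum is the binomial expansion of $(a_0+d)^{m+1-k}$ with the top term $\ell=m+1-k$ removed, so it equals $\bigl((a_0+d)^{m+1-k}-a_0^{m+1-k}\bigr)/d$ when $d\ne0$. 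To avoid dividing by $d$, I will instead treat everything as a polynomial identity in $a_0,d$ and multiply through by $d$. The total becomes
\[
\sum_{k=0}^{m}\binom{m+1}{k}B_k\Bigl(d^{k}(a_0+d)^{m+1-k}-d^{k}a_0^{m+1-k}\Bigr).
\]
This equals $d\,(m+1)a_0^m$ precisely when a Bernoulli-polynomial difference identity holds.

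That identity is $\sum_{k=0}^{m}\binom{m+1}{k}B_k\,x^{m+1-k}$ with $x$ replaced by $x+1$, minus the same sum at $x$, equalling $(m+1)x^m$. Writing $P(x)=\sum_{k\le m}\binom{m+1}{k}B_kx^{m+1-k}=B_{m+1}(x)-B_{m+1}$, the claim is $P(x+1)-P(x)=(m+1)x^m$. Homogenizing with $x=a_0/d$ turns this into the needed identity; alternatively the identity can be read directly with both variables. This is the main obstacle, and I will prove it from \eqref{eq:Bdef} alone. Expanding $P(x+1)$ binomially, the coefficient of $x^{s}$ in $P(x+1)-P(x)$ for $s\le m$ is $\binom{m+1}{s}\sum_{k=0}^{m-s}\binom{m+1-s}{k}B_k$, after removing the $k=m+1-s$ term, which cancels against $P(x)$. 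By \eqref{eq:Bdef} with $n=m+1-s$, this sum vanishes whenever $n\ge2$. For $s=m$ the sum is $B_0=1$, giving the coefficient $m+1$. A small check is needed for the term $s=m+1$, which cancels identically.

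The identity is polynomial in $(a_0,d)$ and holds after multiplying by $d$. Since $\mathbb R[a_0,d]$ has no zero divisors, it holds without the factor $d$, which covers $d=0$ as well. This shows $D_m$ satisfies \eqref{eq:rec-compact}, so $D_m=C_m(a_0,d)$. Finally, for $d\ne0$, factoring $d^m$ out of $D_m$ gives $D_m=d^m\sum_k\binom mk (a_0/d)^{m-k}B_k=d^mB_m(a_0/d)$ by the stated definition of the Bernoulli polynomials.
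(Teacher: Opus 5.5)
Your proof is correct. Part (i) is the paper's argument, but part (ii) takes a genuinely different route.

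The paper does not return to the recurrence for (ii). It expands $(a_0+d(i-1))^m=\sum_k\binom mk a_0^{m-k}d^k(i-1)^k$ and uses linearity of $\Sigma$ to get $S_m=\sum_k\binom mk a_0^{m-k}d^kS_k^{(0,1)}$. Differentiating at $0$ then makes (ii) a two-line consequence of (i), because $C_m=S_m'(0)$ is a derivative at the origin.

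You instead work purely with the recurrences: you check that $D_m$ satisfies \eqref{eq:rec-compact} and appeal to uniqueness. This obliges you to prove $B_{m+1}(x+1)-B_{m+1}(x)=(m+1)x^m$ from \eqref{eq:Bdef}. Your coefficient computation is right, including the constant term, which uses \eqref{eq:Bdef} with $n=m+1$. It also obliges you to treat $d=0$ separately, which you do correctly via homogenization and cancellation of $d$ in $\mathbb R[a_0,d]$.

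The paper's route is shorter and needs no special care at $d=0$. It also uses the framework's own tools: linearity of the polynomial extension and the reading of the constants as Taylor coefficients. Your route does not depend on (i), and it does not touch the sums once \eqref{eq:rec-compact} is available. As a by-product, it derives from \eqref{eq:Bdef} alone the classical identity $\sum_{j=0}^{m}\binom{m+1}{j}B_j(x)=(m+1)x^m$ \cite[Eq.~(24.5.1)]{DLMF}, which the paper only cites in its remark on classical results.
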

\begin{proof}
(i) For $a_0=0$ and $d=1$, \eqref{eq:rec-compact} reads
$\sum_{j=0}^{m}\binom{m+1}{j}C_j(0,1)=(m+1)\,0^m=0$ for $m\ge1$, with
$C_0=1$. This is \eqref{eq:Bdef} with $n=m+1$. Since the coefficient of the
last unknown in \eqref{eq:Bdef} is $n\ne0$, its solution is unique, so
$C_m(0,1)=B_m$.

(ii) By the binomial theorem,
$(a_0+d(i-1))^m=\sum_{k=0}^{m}\binom mk a_0^{m-k}d^k(i-1)^k$, and
$\sum_{i=1}^{n}(i-1)^k$ is the power sum of the progression with $a_0=0$,
$d=1$. By linearity of $\Sigma$ (Section~\ref{sec:why}),
$S_m=\sum_k\binom mk a_0^{m-k}d^k\,S_k^{(0,1)}$, where $S_k^{(0,1)}$ denotes
that power sum. Differentiating at $0$ and using (i) gives
$C_m(a_0,d)=\sum_k\binom mk a_0^{m-k}d^kB_k$. For $d\ne0$ this is
$d^m\sum_k\binom mk B_k(a_0/d)^{m-k}=d^mB_m(a_0/d)$.
\end{proof}

\begin{remark}
No generating function enters the identification: part (i) is the observation that the progression starting
at $0$ reproduces the defining recurrence of the Bernoulli numbers
verbatim, and part (ii) is the binomial theorem.
\end{remark}

\begin{corollary}\label{cor:normal}
\begin{enumerate}
\item[(i)] For $a_0=d=1$: $C_m=B_m(1)=\sum_{k=0}^{m}\binom mk B_k$. Hence
$C_1=1+B_1=+\tfrac12$ and, since $\sum_{k=0}^{m-1}\binom mk B_k=0$ for
$m\ge2$ by \eqref{eq:Bdef}, $C_m=B_m$ for all $m\ge2$.
\item[(ii)] For $a_0=0$, $d=1$ (the sum $0^m+1^m+\dots+(n-1)^m$):
$C_m=B_m(0)=B_m$, in particular $C_1=-\tfrac12$.
\end{enumerate}
\end{corollary}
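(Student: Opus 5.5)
The corollary is a specialization of Theorem~\ref{thm:ident}(ii) combined with the defining recurrence \eqref{eq:Bdef}, so the plan is to read off each part rather than compute anything new.

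For part (i), I will set $a_0=d=1$ in Theorem~\ref{thm:ident}(ii). This gives $C_m(1,1)=\sum_{k=0}^{m}\binom mk 1^{m-k}1^kB_k=\sum_{k=0}^m\binom mk B_k$. Since $d=1\ne0$, the same theorem identifies this with $1^m B_m(1/1)=B_m(1)$. Equivalently, it is the definition $B_m(x)=\sum_k\binom mk B_kx^{m-k}$ evaluated at $x=1$.

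\begin{itemize}
\item For $m=1$ the sum is $\binom10B_0+\binom11B_1=1-\tfrac12=\tfrac12$, using the values $B_0=1$ and $B_1=-\tfrac12$ obtained from \eqref{eq:Bdef} with $n=2$. This agrees with $C_1=a_0-\tfrac d2=\tfrac12$ from \eqref{eq:S1}, which is a useful consistency check.
\item For $m\ge2$ I will split off the last term: $\sum_{k=0}^m\binom mk B_k=\sum_{k=0}^{m-1}\binom mk B_k+\binom mm B_m$. The first sum vanishes by \eqref{eq:Bdef} with $n=m$, which is admissible because $m\ge2$. This leaves $C_m=B_m$.
\end{itemize}

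For part (ii), setting $a_0=0$ and $d=1$ gives $C_m(0,1)=B_m$ directly by Theorem~\ref{thm:ident}(i). To match the stated form $B_m(0)$, I will note that $B_m(0)=\sum_k\binom mk B_k0^{m-k}$. With the convention $0^0=1$, only the term $k=m$ survives, so $B_m(0)=B_m$. In particular $C_1=B_1=-\tfrac12$. I will also remark that this sum is indeed $0^m+1^m+\dots+(n-1)^m$, since with $a_0=0$ and $d=1$ the $i$-th term is $(i-1)^m$.

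There is no real obstacle here. The only points needing care are the convention $0^0=1$ in evaluating $B_m(0)$ and in the $m=0$ case, and making sure \eqref{eq:Bdef} is applied only with $n\ge2$. That restriction is exactly why $m=1$ is treated separately in part (i).
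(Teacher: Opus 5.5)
Your proposal is correct and follows the paper's own argument, which is carried in the statement itself. You specialize Theorem~\ref{thm:ident} to $(a_0,d)=(1,1)$ and $(0,1)$, read off $C_1=1+B_1=\tfrac12$, and eliminate $\sum_{k=0}^{m-1}\binom mk B_k$ for $m\ge2$ using \eqref{eq:Bdef} with $n=m$; your care about $0^0=1$ in $B_m(0)$ and about applying \eqref{eq:Bdef} only for $n\ge2$ is exactly the right bookkeeping.
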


\begin{remark}[The value of $C_1$]
By \eqref{eq:S1}, $C_1=a_0-\tfrac d2$ is determined by the first term and the
common difference of the progression. The sum $1^m+\dots+n^m$ gives
$+\tfrac12$ and the sum $0^m+\dots+(n-1)^m$ gives $-\tfrac12$. The values
$B_1(1)=+\tfrac12$ and $B_1(0)=-\tfrac12$ thus correspond to the two
summation ranges; the framework explains the difference rather than choosing
between them.
\end{remark}

\begin{remark}[Relation to classical results]
Multiplying \cite[Eq.~(24.5.1)]{DLMF} (with $n=m+1$ and $x=a_0/d$) by $d^m$
gives exactly \eqref{eq:rec-compact} with $C_j=d^jB_j(a_0/d)$
(Theorem~\ref{thm:ident}), and \eqref{eq:closed} is the expansion of
\cite[Eq.~(24.4.9)]{DLMF} by the addition formula
\cite[Eq.~(24.4.12)]{DLMF}. The formulas of this section are
therefore classical; what the present derivation adds is that the
constants arise as integration constants $C_m=S_m'(0)$, computed from the
single condition $S_m(1)=a_0^m$.
\end{remark}

\begin{remark}[Numerical values]
For $a_0=d=1$ the recurrence \eqref{eq:rec} gives the values of
Table~\ref{tab:C}. For $2\le m\le16$ they agree with
\cite[Table~24.2.1]{DLMF}; the odd-index values vanish, in agreement with
\cite[Eq.~(24.2.2)]{DLMF} and Section~\ref{sec:odd}; for larger $m$ (up to
$m=300$, Section~\ref{sec:verif}) they agree with an independent exact
computation.
\end{remark}

\begin{table}[h]
\centering
\begin{tabular}{rr@{\qquad}rr}
\hline
$m$ & $C_m$ & $m$ & $C_m$\\ \hline
1 & $1/2$ & 12 & $-691/2730$\\
2 & $1/6$ & 14 & $7/6$\\
4 & $-1/30$ & 16 & $-3617/510$\\
6 & $1/42$ & 18 & $43867/798$\\
8 & $-1/30$ & 20 & $-174611/330$\\
10 & $5/66$ & odd $m\ge3$ & $0$\\ \hline
\end{tabular}
\caption{The constants $C_m$ for $a_0=d=1$, computed by \eqref{eq:rec}
from $C_1=\tfrac12$ (exact rational arithmetic).}
\label{tab:C}
\end{table}

\section{The Coefficient Row}
\label{sec:row}

The coefficients that enter the recurrence \eqref{eq:rec} are
\[
  A(m,k)=\frac{1}{k+1}\binom{m}{k},\qquad 1\le k\le m-2,\quad m\ge3,
\]
and we call $(A(m,1),\dots,A(m,m-2))$ the \emph{coefficient row} of order
$m$. (In \eqref{eq:closedA} the index runs to $k=m-1$; the extra value
$A(m,m-1)=1$ is the coefficient of $C_m$ itself.) All properties of the row
follow from one observation.

\begin{proposition}\label{prop:pascal}
For $0\le k\le m-1$,
\[
  A(m,k)=\frac{1}{m+1}\binom{m+1}{k+1}=\frac{1}{m-k}\binom{m}{k+1}.
\]
Thus the coefficient row of order $m$ consists of the entries
$\binom{m+1}{j}$, $2\le j\le m-1$, of row $m+1$ of Pascal's triangle,
divided by $m+1$ (Table~\ref{tab:rows}).
\end{proposition}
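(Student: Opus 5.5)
The plan is to prove both equalities by writing every binomial coefficient as a ratio of factorials and regrouping, so that each expression reduces to the same quantity
\[
  \frac{m!}{(k+1)!\,(m-k)!}.
\]
For the first equality I will use the absorption identity $\binom{m+1}{k+1}=\frac{m+1}{k+1}\binom{m}{k}$. This follows at once from
\[
  \binom{m+1}{k+1}=\frac{(m+1)!}{(k+1)!\,(m-k)!}=\frac{m+1}{k+1}\cdot\frac{m!}{k!\,(m-k)!}.
\]
Dividing by $m+1$ gives $A(m,k)=\frac{1}{k+1}\binom mk=\frac{1}{m+1}\binom{m+1}{k+1}$. This is valid for all $0\le k\le m$, and it is the same identity already used at the end of the proof of Theorem~\ref{thm:closed}.

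For the second equality I will use the companion identity $\binom{m}{k+1}=\frac{m-k}{k+1}\binom mk$, or, more simply,
\[
  \frac{1}{m-k}\binom{m}{k+1}=\frac{m!}{(k+1)!\,(m-k-1)!\,(m-k)}=\frac{m!}{(k+1)!\,(m-k)!}.
\]
This requires $m-k\ne0$, which is exactly why the statement restricts to $0\le k\le m-1$. The only point needing care is this division, and the hypothesis excludes $k=m$, so I do not expect a real obstacle.

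Finally, for the description of the coefficient row, I substitute $j=k+1$. As $k$ runs over $1\le k\le m-2$, the index $j$ runs over $2\le j\le m-1$, and by the first equality $A(m,k)=\binom{m+1}{j}/(m+1)$. Hence the row consists of the interior entries $\binom{m+1}{j}$, $2\le j\le m-1$, of row $m+1$ of Pascal's triangle, each divided by $m+1$.

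I will also note what is left out. The omitted entries $j=0,1$ correspond to the separately displayed terms in \eqref{eq:rec} for $C_0$ and $C_1$, and $j=m$ corresponds to the coefficient of $C_m$ itself, namely $A(m,m-1)=1$. This matches the parenthetical remark preceding the proposition.
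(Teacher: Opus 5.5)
Your proof is correct and is essentially the paper's argument: the paper's proof also just observes that all three expressions equal $\frac{m!}{(k+1)!\,(m-k)!}$. Your substitution $j=k+1$ for the row description, and your accounting of the omitted entries $j=0,1,m$ in \eqref{eq:rec}, are consistent with the statement and with the remark preceding it.
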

\begin{proof}
All three expressions equal $\dfrac{m!}{(k+1)!\,(m-k)!}$.
\end{proof}

\begin{corollary}\label{cor:rowprops}
Let $m\ge3$ and $1\le k\le m-2$.
\begin{enumerate}
\item[(i)] \emph{Symmetry:} $A(m,k)=A(m,m-1-k)$.
\item[(ii)] \emph{Ratio:} $\dfrac{A(m,k+1)}{A(m,k)}=\dfrac{m-k}{k+2}$ for
$k\le m-3$. Hence the row increases for $k<\frac{m-2}{2}$ and decreases for
$k>\frac{m-2}{2}$. Its maximum is attained only at $k=\frac{m-1}{2}$ if $m$
is odd, and at the two positions $k=\frac{m-2}{2}$ and $k=\frac m2$, with
equal values, if $m$ is even.
\item[(iii)] \emph{Catalan numbers:} $A(2k,k)=\frac{1}{k+1}\binom{2k}{k}
=\mathrm{Cat}_k$ \cite[Eq.~(26.5.1)]{DLMF}; within the range of the row this
occurs for $k\ge2$.
\item[(iv)] \emph{Integrality:} if $(k+1)\mid m$, then $A(m,k)\in\mathbb Z$.
The converse is false: $A(7,2)=7$, $A(6,4)=3$, $A(4,2)=2$.
\end{enumerate}
\end{corollary}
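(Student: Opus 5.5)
All four parts will be read off from Proposition~\ref{prop:pascal}, which gives
\[
A(m,k)=\frac{1}{m+1}\binom{m+1}{k+1}=\frac{m!}{(k+1)!\,(m-k)!}.
\]

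For (i), the symmetry $\binom{m+1}{j}=\binom{m+1}{m+1-j}$ with $j=k+1$ gives $A(m,k)=\frac{1}{m+1}\binom{m+1}{m-k}$. The right-hand side is $A(m,m-1-k)$, since $(m-1-k)+1=m-k$. The range $1\le k\le m-2$ is preserved by $k\mapsto m-1-k$.

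For (ii), I would take the ratio of the factorial forms:
\[
\frac{A(m,k+1)}{A(m,k)}=\frac{(k+1)!\,(m-k)!}{(k+2)!\,(m-k-1)!}=\frac{m-k}{k+2}.
\]
This ratio exceeds $1$ iff $k<\frac{m-2}{2}$, equals $1$ iff $k=\frac{m-2}{2}$, and is less than $1$ otherwise. So the row strictly increases up to index $\lceil\frac{m-2}{2}\rceil$ and strictly decreases after it. For $m$ odd the ratio never equals $1$, and the unique maximum is at the first $k$ where the ratio drops below $1$, namely $k=\frac{m-1}{2}$. For $m$ even the ratio equals $1$ exactly at $k=\frac{m-2}{2}$, giving two equal maxima at $\frac{m-2}{2}$ and $\frac m2$. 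I should check that these positions lie in $1\le k\le m-2$. This holds for $m\ge3$, except that $m=4$ gives $\frac{m-2}{2}=1$ and $\frac m2=2=m-2$, which is still fine. The symmetry in (i) confirms the placement of the maxima. This bookkeeping is the only mildly delicate step.

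For (iii), substitute $m=2k$ into the definition $A(m,k)=\frac{1}{k+1}\binom mk$, which gives $\mathrm{Cat}_k$ directly. The constraint $k\le m-2=2k-2$ means $k\ge2$.

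For (iv), use the form $A(m,k)=\frac{1}{k+1}\binom mk$ together with the absorption identity $\binom mk=\frac{m}{k}\binom{m-1}{k-1}$. That route does not isolate the factor $k+1$, so instead I would use
\[
(k+1)\binom{m}{k+1}=(m-k)\binom{m}{k}.
\]
Writing $A(m,k)=\frac{1}{m-k}\binom{m}{k+1}$ from Proposition~\ref{prop:pascal} does not help directly either. The cleaner identity is
\[
A(m,k)=\frac{1}{m+1}\binom{m+1}{k+1}=\frac{1}{k+1}\binom{m}{k}.
\]
Note also that $\binom{m}{k}=\frac{m}{k}\binom{m-1}{k-1}$ gives $A(m,k)=\frac{m}{k(k+1)}\binom{m-1}{k-1}$.

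The natural argument is as follows. Let $g=\gcd(k+1,m-k)$. From $(k+1)A(m,k)=\binom mk$ and $(m-k)A(m,k)=\binom m{k+1}$, Bézout gives $g\,A(m,k)\in\mathbb Z$. If $(k+1)\mid m$, then $m-k=m+1-(k+1)\equiv 1\pmod{k+1}$, so $g=1$ and $A(m,k)\in\mathbb Z$.

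The counterexamples in (iv) are direct computations:
\[
A(7,2)=\tfrac13\cdot21=7,\qquad A(6,4)=\tfrac15\cdot15=3,\qquad A(4,2)=\tfrac13\cdot6=2,
\]
and in each case $k+1\nmid m$. The main obstacle is finding the Bézout step in (iv); everything else is routine factorial manipulation.
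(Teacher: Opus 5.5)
Your parts (i)--(iii) follow the paper's argument: Pascal symmetry with $N=m+1$, $j=k+1$; the factorial ratio; and substituting $m=2k$. In (ii) you add more careful range bookkeeping. There is one wording slip there: for even $m$ the row does not strictly decrease immediately after index $\lceil\frac{m-2}{2}\rceil$, because the step from $\frac{m-2}{2}$ to $\frac m2$ is flat. Your next sentence states this correctly, so nothing is lost.

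Part (iv) is where you take a genuinely different route. The paper uses the absorption identity $k\binom mk=m\binom{m-1}{k-1}$. If $(k+1)\mid m$, then $(k+1)\mid k\binom mk$, and $\gcd(k,k+1)=1$ gives $(k+1)\mid\binom mk$. So the route you set aside as ``not isolating the factor $k+1$'' does work, in one line.

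Your B\'ezout argument, combining $(k+1)A(m,k)=\binom mk$ with $(m-k)A(m,k)=\binom m{k+1}$, is also correct, and it proves more. Since $m-k=(m+1)-(k+1)$, you have $\gcd(k+1,m-k)=\gcd(k+1,m+1)$. Your argument therefore gives $\gcd(k+1,m+1)\,A(m,k)\in\mathbb Z$, which is the classical fact that $\frac{\gcd(N,j)}{N}\binom Nj$ is an integer. Hence $A(m,k)\in\mathbb Z$ whenever $\gcd(k+1,m+1)=1$. This hypothesis is strictly weaker than $(k+1)\mid m$. It also accounts for all three of the paper's counterexamples to the converse, since $\gcd(3,8)=\gcd(5,7)=\gcd(3,5)=1$.

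In short, the paper's argument is shorter and tailored to the stated hypothesis. Yours costs a B\'ezout step but yields the sharper integrality criterion.
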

\begin{proof}
(i) By Proposition~\ref{prop:pascal} and
$\binom{N}{j}=\binom{N}{N-j}$ with $N=m+1$, $j=k+1$, since
$N-j=m-k=(m-1-k)+1$.
(ii) The ratio follows from the factorial form; it exceeds $1$ exactly when
$k<\frac{m-2}{2}$ and equals $1$ exactly when $k=\frac{m-2}{2}$, which is an
integer only for even $m$.
(iii) Set $m=2k$ in the definition.
(iv) From $k\binom{m}{k}=m\binom{m-1}{k-1}$, if $(k+1)\mid m$ then
$(k+1)\mid k\binom mk$; since $\gcd(k,k+1)=1$, $(k+1)\mid\binom mk$.
\end{proof}

\begin{table}[h]
\centering
\begin{tabular}{cl}
\hline
$m$ & $A(m,1),\dots,A(m,m-2)$\\ \hline
3 & $\tfrac32$\\
4 & $2,\ 2$\\
5 & $\tfrac52,\ \tfrac{10}{3},\ \tfrac52$\\
6 & $3,\ 5,\ 5,\ 3$\\
7 & $\tfrac72,\ 7,\ \tfrac{35}{4},\ 7,\ \tfrac72$\\
8 & $4,\ \tfrac{28}{3},\ 14,\ 14,\ \tfrac{28}{3},\ 4$\\
9 & $\tfrac92,\ 12,\ 21,\ \tfrac{126}{5},\ 21,\ 12,\ \tfrac92$\\
10 & $5,\ 15,\ 30,\ 42,\ 42,\ 30,\ 15,\ 5$\\ \hline
\end{tabular}
\caption{Coefficient rows for $3\le m\le10$ (exact values).}
\label{tab:rows}
\end{table}

\subsection{Computation}
By Corollary~\ref{cor:rowprops}(ii), a row is generated without factorials:
$A(m,1)=\frac m2$ and $A(m,k+1)=A(m,k)\,\frac{m-k}{k+2}$, one multiplication
and one division per entry; by symmetry only the first half need be
computed. The symmetry does not, however, halve the work in
\eqref{eq:rec}: the sum there needs every term, because the constants
$C_{k+1}$ on the two halves of the row differ. In the
normalized case $a_0=d=1$, the odd constants vanish (Section~\ref{sec:odd}),
so only the terms with $k$ odd contribute, about half of them. Computing
$C_1,\dots,C_M$ by \eqref{eq:rec} therefore takes $O(M^2)$ arithmetic
operations on rational numbers whose size grows with $M$.

\section{Vanishing of the Odd Constants}
\label{sec:odd}

In this section $a_0=d=1$, so $S_m(n)=\sum_{i=1}^{n}i^m$ and
$C_m=S_m'(0)$. The proof below uses only the uniqueness of the polynomial
extension (Theorem~\ref{thm:ext}) and the differentiation theorem
(Theorem~\ref{thm:diff}); no generating function is needed.

\begin{lemma}[Reflection]\label{lem:refl}
For $m\ge1$ and all real $n$,
\[
  S_m(-1-n)=(-1)^{m+1}S_m(n).
\]
Moreover $S_m(-1)=0$ for $m\ge1$, while $S_0(-1)=-1$.
\end{lemma}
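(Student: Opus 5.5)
The plan is to use the uniqueness part of Theorem~\ref{thm:ext}. I will show that the polynomial $T(t):=(-1)^{m+1}S_m(-1-t)$ satisfies the defining conditions \eqref{eq:defS} for the summand $x(t)=t^m$, so $T=S_m$. Two things must be checked: the difference equation $T(t+1)-T(t)=(t+1)^m$ for all real $t$, and the normalization $T(0)=0$. The normalization reduces to the claim $S_m(-1)=0$. So I will first establish the values at $-1$, and then verify the difference equation.

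\emph{Values at $-1$.} Put $t=-1$ in \eqref{eq:defS}: $S_m(0)-S_m(-1)=0^m$. Since $S_m(0)=0$, this gives $S_m(-1)=-0^m$. That is $0$ for $m\ge1$ and $-1$ for $m=0$, using the convention $0^0=1$. This proves the second sentence of the lemma.

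\emph{Difference equation.} Compute directly:
\[
T(t+1)-T(t)=(-1)^{m+1}\bigl(S_m(-2-t)-S_m(-1-t)\bigr).
\]
Apply \eqref{eq:defS} at the point $s=-2-t$: $S_m(-1-t)-S_m(-2-t)=(-1-t)^m=(-1)^m(t+1)^m$. Substituting, $T(t+1)-T(t)=(-1)^{m+1}\cdot(-(-1)^m)(t+1)^m=(t+1)^m$. Since \eqref{eq:defS} holds for all real arguments, this holds identically in $t$. Together with $T(0)=(-1)^{m+1}S_m(-1)=0$ for $m\ge1$, uniqueness gives $T=S_m$, which is the reflection identity.

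There is no real obstacle here. The only points needing care are tracking the sign $(-1)^{m+1}$ and using the identity at the non-integer point $s=-2-t$, which is legitimate because \eqref{eq:defS} is a polynomial identity valid for all real $t$. The argument fails for $m=0$ precisely because $T(0)=S_0(-1)\cdot(-1)=1\ne0$, which is why the lemma is stated for $m\ge1$ and the value $S_0(-1)=-1$ is recorded separately. Theorem~\ref{thm:diff} is not needed for this lemma; it will presumably be used afterwards, when differentiating the reflection identity at $0$ and $-1$ to deduce the vanishing of the odd $C_m$.
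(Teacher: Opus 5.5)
Your proposal is correct and is essentially the paper's own proof. The paper also obtains $S_m(-1)=0$ by setting $t=-1$ in the difference equation. It then checks that $(-1)^{m+1}S_m(-1-n)$ satisfies \eqref{eq:defS}, applying the relation at $-n-2$, and concludes by the uniqueness in Theorem~\ref{thm:ext}; the only cosmetic difference is that the paper reads off $S_0(-1)=-1$ from $S_0(n)=n$ rather than from the $0^0=1$ convention.
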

\begin{proof}
Setting $t=-1$ in $S_m(t+1)-S_m(t)=(t+1)^m$ gives
$S_m(0)-S_m(-1)=0^m$, so $S_m(-1)=0$ for $m\ge1$; and $S_0(n)=n$.

Let $Q(n)=(-1)^{m+1}S_m(-1-n)$. Then $Q(0)=(-1)^{m+1}S_m(-1)=0$, and,
using the difference relation at $t=-n-2$,
\[
  Q(n+1)-Q(n)=(-1)^{m+1}\bigl(S_m(-n-2)-S_m(-n-1)\bigr)
  =(-1)^{m+1}\bigl(-(-n-1)^m\bigr)=(n+1)^m .
\]
Thus $Q$ satisfies the defining conditions \eqref{eq:defS} of $S_m$, and
$Q=S_m$ by Theorem~\ref{thm:ext}.
\end{proof}

Via Theorem~\ref{thm:ident}, Lemma~\ref{lem:refl} is equivalent to the
reflection formula \cite[Eq.~(24.4.3)]{DLMF}; the proof above uses only
Theorem~\ref{thm:ext}.

\begin{theorem}\label{thm:odd}
For $a_0=d=1$: $C_1=\tfrac12$, and $C_m=0$ for every odd $m\ge3$.
\end{theorem}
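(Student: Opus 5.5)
The plan is to differentiate the reflection identity of Lemma~\ref{lem:refl} and evaluate at a suitable point. The value $C_1=\tfrac12$ is already given by \eqref{eq:S1} with $a_0=d=1$. Fix an odd $m\ge3$. Then $(-1)^{m+1}=1$, so Lemma~\ref{lem:refl} gives $S_m(-1-n)=S_m(n)$ for all real $n$. Thus $S_m$ is symmetric about $n=-\tfrac12$.

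Differentiating this polynomial identity once gives $-S_m'(-1-n)=S_m'(n)$. At $n=0$ this yields
\[
  C_m=S_m'(0)=-S_m'(-1).
\]
It therefore suffices to show $S_m'(-1)=S_m'(0)$, which would force $C_m=0$.

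For the needed equality I use the differentiated difference equation, as in Theorem~\ref{thm:diff}. Differentiating $S_m(t+1)-S_m(t)=(t+1)^m$ gives
\[
  S_m'(t+1)-S_m'(t)=m(t+1)^{m-1}.
\]
At $t=-1$ the right side is $m\cdot 0^{m-1}=0$, since $m-1\ge2>0$. Hence $S_m'(0)=S_m'(-1)$. Combined with $C_m=-S_m'(-1)$, this gives $C_m=-C_m$, so $C_m=0$.

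This is also where $m\ge3$ matters, and it is the only real point to check. For $m=1$ the same differentiated relation reads $S_1'(0)-S_1'(-1)=1$. With the reflection, which gives $S_1'(0)=-S_1'(-1)$, this yields $2C_1=1$, recovering $C_1=\tfrac12$ consistently. Equivalently, I could use Theorem~\ref{thm:diff} with $r=1$ directly: $S_m'=mS_{m-1}+C_m$, and evaluating at $-1$ with $S_{m-1}(-1)=0$ for $m-1\ge1$ (Lemma~\ref{lem:refl}) gives the same equality $S_m'(-1)=C_m$. I expect no substantive obstacle beyond tracking the sign $(-1)^{m+1}$ and the exponent condition $m-1\ge1$.
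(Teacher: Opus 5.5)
Your proof is correct and is essentially the paper's argument. You differentiate the reflection identity at $n=0$ to get $C_m=-S_m'(-1)$ for odd $m$, then show $S_m'(-1)=S_m'(0)$ by evaluating the differentiated difference equation at $t=-1$; this is the same fact the paper obtains from Theorem~\ref{thm:diff} with $S_{m-1}(-1)=0$, and your $m=1$ check via $S_1'(0)-S_1'(-1)=1$ matches the paper's use of $S_0(-1)=-1$.
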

\begin{proof}
Differentiating Lemma~\ref{lem:refl} gives
$-S_m'(-1-n)=(-1)^{m+1}S_m'(n)$, and at $n=0$
\[
  C_m=S_m'(0)=(-1)^m\,S_m'(-1).
\]
By Theorem~\ref{thm:diff} with $r=1$, $S_m'(n)=m\,S_{m-1}(n)+C_m$, so
$S_m'(-1)=m\,S_{m-1}(-1)+C_m$. Hence
\[
  C_m=(-1)^m\bigl(m\,S_{m-1}(-1)+C_m\bigr).
\]
For $m=1$, $S_0(-1)=-1$ gives $C_1=1-C_1$, i.e.\ $C_1=\tfrac12$. For odd
$m\ge3$, $S_{m-1}(-1)=0$ gives $C_m=-C_m$, i.e.\ $C_m=0$.
\end{proof}

\begin{remark}[Generating-function proof]
The classical proof reaches the same conclusion through generating
functions. By Corollary~\ref{cor:rbef}, $\sum_{j=0}^{m}\binom{m+1}{j}C_j=m+1$
for all $m\ge0$ (trivially for $m=0$). Multiplying by $x^{m+1}/(m+1)!$ and summing gives, as formal
power series, $G(x)\,(e^x-1)=xe^x$ for $G(x)=\sum_{j\ge0}C_jx^j/j!$, so
$G(x)=xe^x/(e^x-1)$, the generating function of $B_j(1)$
\cite[Eq.~(24.2.3)]{DLMF}. From $G(-x)=x/(e^x-1)=G(x)-x$ one reads off
$(-1)^jC_j=C_j$ for $j\ge2$ and $-C_1=C_1-1$, which is
Theorem~\ref{thm:odd} again. The proof via Lemma~\ref{lem:refl} avoids
generating functions altogether.
\end{remark}

\begin{remark}[General progressions]
For general $a_0$ and $d$ the odd constants do not vanish; for example
$a_0=2$, $d=1$ gives $C_3=3$. For $d\ne0$, by
Theorem~\ref{thm:ident} and the reflection formula
$B_m(1-x)=(-1)^mB_m(x)$ \cite[Eq.~(24.4.3)]{DLMF},
$C_m(d-a_0,d)=(-1)^mC_m(a_0,d)$; taking $x=\tfrac12$ there gives
$B_m(\tfrac12)=0$ for odd $m$, so the constants $C_3,C_5,\dots$ vanish when
$a_0/d\in\{0,\tfrac12,1\}$, for instance for the sum
$0^m+1^m+\dots+(n-1)^m$.
\end{remark}

\begin{remark}[Computation]
Theorem~\ref{thm:odd} is a proved property of the constants
$C_m=S_m'(0)$, which by Corollary~\ref{cor:rbef} are the unique solution of
the recurrence; it is not an assumption. An implementation may skip the odd constants in the normalized
case, but a verification program should compute them and confirm that they
vanish.
\end{remark}

\section{Sequences with Polynomial Differences}
\label{sec:polydiff}

Let $(x_n)_{n\ge1}$ satisfy $x_1=a_0$ and $x_{n+1}-x_n=D(n)$, where $D\not\equiv0$ is a
polynomial of degree $\ell$. By Theorem~\ref{thm:ext} applied to $D$,
$x_n=a_0+\sum_{i=1}^{n-1}D(i)$ is a polynomial in $n$; its degree is
$q=\ell+1$, because the leading term of $D$ contributes the leading term of
the sum, and
Theorem~\ref{thm:int} applies to $S(n)=\sum_{i=1}^{n}x_i$ directly.

\begin{proposition}[Procedure]\label{prop:proc}
Write $x'(t)=\sum_{k=0}^{q-1}e_k\,t^k$ and let $\sigma_k(n)=\sum_{i=1}^{n}i^k$
be the normalized power sums of Section~\ref{sec:rbef} ($\sigma_0(n)=n$). Then
\[
  U=\Sigma[x']=\sum_{k=0}^{q-1}e_k\,\sigma_k,\qquad
  S(n)=\int_0^{n}U(t)\,dt+c\,n,\qquad c=a_0-\int_0^{1}U(t)\,dt .
\]
\end{proposition}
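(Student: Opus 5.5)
The plan is to apply Theorem~\ref{thm:int} directly to the polynomial $x(t)$ that extends $n\mapsto x_n$. Two preliminary facts are needed first.

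First, I would fix the polynomial $x$ and check its basic properties. By Theorem~\ref{thm:ext} applied to $D\in\PP_\ell$, the map $n\mapsto\sum_{i=1}^{n-1}D(i)$ is the restriction of the polynomial $\Sigma[D](t-1)$. So $x(t)=a_0+\Sigma[D](t-1)$ is a polynomial with $x(n)=x_n$ for all integers $n\ge1$. Evaluating at $t=1$ gives $x(1)=a_0+\Sigma[D](0)=a_0$.

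The degree claim $q=\ell+1$ follows from the leading coefficient. If $D(t)=\delta t^\ell+\dots$ with $\delta\ne0$, then the relation $\Delta\Sigma[D](t-1)=D(t)$ forces $\Sigma[D]$ to have leading term $\frac{\delta}{\ell+1}t^{\ell+1}$, by the computation of $\Delta t^{j}$ in Lemma~\ref{lem:onto}. Consequently $x'$ has degree at most $q-1$, and the expansion $x'(t)=\sum_{k=0}^{q-1}e_kt^k$ is legitimate.

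Second, I would identify $U$. Theorem~\ref{thm:int} with this $x$ gives
\[
S(n)=\sum_{i=1}^n x(i)=\int_0^n U(t)\,dt+c\,n,\qquad U=\Sigma[x'],\qquad c=x(1)-\int_0^1U(t)\,dt.
\]
Since $x(1)=a_0$, this is exactly the stated value of $c$.

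To evaluate $U$, I would use the linearity of $\Sigma$ noted after Remark~\ref{rem:why}:
\[
\Sigma[x']=\sum_{k=0}^{q-1}e_k\,\Sigma[t^k].
\]
For $k\ge1$, $\Sigma[t^k]$ is the polynomial extension of $\sum_{i=1}^n i^k=\sigma_k(n)$. For $k=0$, $\Sigma[1](n)=n=\sigma_0(n)$. This gives $U=\sum_k e_k\sigma_k$.

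The only point requiring care is that $S(n)=\sum_{i=1}^n x_i$ agrees with $\sum_{i=1}^n x(i)$ for $n\ge0$. This holds because $x(i)=x_i$ for every $i\ge1$, and only such indices occur in the sum. The rest is direct substitution, so I expect no real obstacle; the degree bookkeeping for $x'$ is the most delicate part.
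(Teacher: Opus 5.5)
Your proposal is correct and follows the paper's own proof: linearity of $\Sigma$ gives $U=\sum_k e_k\sigma_k$, and Theorem~\ref{thm:int} with $x(1)=a_0$ gives the rest. Your explicit construction $x(t)=a_0+\Sigma[D](t-1)$ and the degree check only spell out what the paper states in the paragraph just before the proposition.
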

\begin{proof}
The first identity is linearity of $\Sigma[\cdot]$; the rest is
Theorem~\ref{thm:int} with $x(1)=a_0$.
\end{proof}

\begin{remark}
Summing $x_n$ term by term uses the power sums $\sigma_0,\dots,\sigma_q$.
The procedure takes only $\sigma_0,\dots,\sigma_{q-1}$ as input; the
integration step does the work of the highest power sum (it is
Corollary~\ref{cor:power} again), and its linear coefficient comes from the
single condition $S(1)=a_0$. This is the same mechanism by which the constants
$C_m$ arise in Section~\ref{sec:rbef}.
\end{remark}

\subsection{Example: cubic differences}
To avoid clashes with the common difference $d$ of earlier sections, write
\[
  D(i)=\alpha+\beta i+\gamma i^2+\delta i^3 .
\]
Then, with the classical values of $\sigma_1,\sigma_2,\sigma_3$,
\begin{equation}\label{eq:xcubic}
  x_n=a_0+\alpha(n-1)+\frac{\beta}{2}(n-1)n+\frac{\gamma}{6}(n-1)n(2n-1)
      +\frac{\delta}{4}(n-1)^2n^2 ,
\end{equation}
\begin{equation}\label{eq:xprime}
  x'(n)=\delta n^3+\Bigl(\gamma-\frac32\delta\Bigr)n^2
        +\Bigl(\beta-\gamma+\frac{\delta}{2}\Bigr)n
        +\Bigl(\alpha-\frac{\beta}{2}+\frac{\gamma}{6}\Bigr).
\end{equation}
Proposition~\ref{prop:proc} gives
$U=\delta\sigma_3+(\gamma-\tfrac32\delta)\sigma_2+(\beta-\gamma+\tfrac{\delta}{2})\sigma_1
+(\alpha-\tfrac{\beta}{2}+\tfrac{\gamma}{6})\sigma_0$, and integration yields
\begin{equation}\label{eq:Scubic}
  S(n)=\frac{\delta}{20}n^5+\frac{\gamma}{12}n^4
      +\Bigl(\frac{\beta}{6}-\frac{\delta}{12}\Bigr)n^3
      +\Bigl(\frac{\alpha}{2}-\frac{\gamma}{12}\Bigr)n^2
      +c\,n,
  \qquad
  c=a_0-\frac{\alpha}{2}-\frac{\beta}{6}+\frac{\delta}{30}.
\end{equation}
Formula \eqref{eq:Scubic} agrees identically with
$\sum_{i=1}^{n}x_i$ computed from \eqref{eq:xcubic}.

\section{Computational Verification}
\label{sec:verif}

The recurrence \eqref{eq:rec} and the closed form \eqref{eq:closedA} were
implemented in exact rational arithmetic (Appendix~\ref{app:code}). The
program computes every constant, including those with odd index, and
performs the following checks, all of which passed for $M=300$.
\begin{enumerate}
\item For $3\le m\le300$, the coefficient row generated by symmetry from
the form of Proposition~\ref{prop:pascal} equals both the definition
$\binom mk/(k+1)$ and the row produced by the ratio rule of
Section~\ref{sec:row}.
\item For $a_0=d=1$, $C_m$ equals the Bernoulli number $B_m$ computed
independently with SymPy 1.14, for $2\le m\le300$. ($C_1=\tfrac12$ holds by
construction; SymPy's \texttt{bernoulli(1)} returns $+\tfrac12$, which does
not affect checks that start at $m=2$.)
\item The $149$ odd-index constants $C_3,C_5,\dots,C_{299}$ are computed,
not imposed, and all vanish (Theorem~\ref{thm:odd}).
\item For every even $m$ with $2\le m\le300$, $C_m$ satisfies the von
Staudt--Clausen theorem $C_{m}+\sum_{p\ \mathrm{prime},\,(p-1)\mid m}1/p
\in\mathbb Z$ \cite[Eq.~(24.10.1)]{DLMF}.
\item For $25$ random rational pairs $(a_0,d)$, the closed form
\eqref{eq:closedA} reproduces the direct sum for $1\le m\le40$ and all
$1\le n\le m+2$; since both sides are polynomials in $n$ of degree at most
$m+1$, this proves the identity for these $(a_0,d)$ and $m$.
\item For $6$ further random pairs and $2\le m\le25$,
$C_m=d^mB_m(a_0/d)$ (Theorem~\ref{thm:ident}).
\end{enumerate}

\section{Conclusion}
\label{sec:concl}

For sums whose terms are polynomial in the index, the polynomial extension
turns the difference equation of the sum into an identity that may be
differentiated and integrated exactly. Applied to power sums of an
arithmetic progression, differentiation lowers the power by one and
integration restores it with a single constant, $C_m=S_m'(0)$, fixed by
$S_m(1)=a_0^m$. This yields the closed form and the recurrence of
Section~\ref{sec:rbef} without induction on the exponent and without
generating functions. With the Bernoulli numbers defined by their classical recurrence, which the
progression starting at $0$ reproduces exactly, the constants are
$d^mB_m(a_0/d)$, and $B_m(1)$ in
the normalized case; the value $C_1=+\tfrac12$ reflects the summation
range, and the vanishing of the
odd constants follows from a reflection identity. The same mechanism gives
closed forms for sequences with polynomial differences. The formulas are
classical in content, and in the normalized case the integral recursion is
known \cite{Cereceda,Coen}; the contribution of the paper is a rigorous
foundation for it, its extension to arbitrary arithmetic progressions, and
derivations in which each constant is explained.

The method requires the terms to be polynomial in the index. For other
sequences the partial sums need not be polynomial: for reciprocals there is
no polynomial extension, and the uniqueness on which Section~\ref{sec:why}
rests is lost. For geometric terms the partial sums have a natural
extension of exponential type, and an analogue of the method for that class
may be possible. A natural extension is to alternating sums
$\sum_{i=1}^{n}(-1)^{i-1}(a_0+(i-1)d)^m$, whose classical closed form involves
Euler polynomials \cite[Eq.~(24.4.10)]{DLMF}; the same approach, with the
forward difference replaced by the operator $p(t+1)+p(t)$, is expected to
produce the corresponding constants as integration constants.

\appendix
\section{Reference Implementation}
\label{app:code}

The following Python program implements Proposition~\ref{prop:pascal}, the
recurrence \eqref{eq:rec} and the closed form \eqref{eq:closedA}, and runs
the checks of Section~\ref{sec:verif}. It requires Python~3.8 or later and, for the
checks, SymPy.

{\footnotesize
\begin{verbatim}
"""
RBEF reference implementation and verification.

generator_row(m)       : coefficient row A(m,k) = binom(m,k)/(k+1), k=1..m-2,
                         built from one half by A(m,k) = A(m,m-1-k).
rbef_constants(M,a0,d) : constants C_1..C_M by the recurrence of
                         Corollary 4.3. Every constant is computed; the
                         odd-index ones are not skipped, so their
                         vanishing in the normalized case is checked.
verify(M)              : compares with independent references and
                         rebuilds S_m(n) for general a0, d.

Usage: python3 rbef_reference.py [M]   (default M = 300)
Requires Python >= 3.8 (math.comb); the checks need sympy.
"""
import math
import random
import sys
from fractions import Fraction as F


# ------------------------------------------------------ coefficient row
def generator_row(m):
    """[A(m,1), ..., A(m,m-2)], A(m,k) = binom(m+1,k+1)/(m+1)."""
    n = m - 2
    if n < 1:
        return []
    row = [None] * n                  # row[k-1] = A(m,k)
    for k in range(1, n // 2 + 1):    # one half ...
        v = F(math.comb(m + 1, k + 1), m + 1)
        row[k - 1] = v
        row[(m - 1 - k) - 1] = v      # ... and its mirror image
    if n % 2 == 1:                    # a centre exists when m-2 is odd
        k = (n + 1) // 2
        row[k - 1] = F(math.comb(m + 1, k + 1), m + 1)
    assert all(x is not None for x in row)
    return row


# ------------------------------------------------------ RBEF constants
def rbef_constants(M, a0=F(1), d=F(1)):
    a0, d = F(a0), F(d)
    C = {1: a0 - d / 2}
    for m in range(2, M + 1):
        row = generator_row(m)
        s = F(0)
        for k in range(1, m - 1):
            s += row[k - 1] * d ** (m - k - 1) * C[k + 1]
        C[m] = a0 ** m - d ** m / (m + 1) - d ** (m - 1) * C[1] - s
    return C


def power_sum_from_rbef(m, n, a0, d, C):
    a0, d, n = F(a0), F(d), F(n)
    tot = d ** m * n ** (m + 1) / (m + 1) + d ** (m - 1) * C[1] * n ** m
    row = generator_row(m)
    for k in range(1, m - 1):
        tot += row[k - 1] * d ** (m - k - 1) * C[k + 1] * n ** (m - k)
    if m >= 2:
        tot += C[m] * n               # the k = m-1 term, A(m,m-1) = 1
    return tot


def direct_power_sum(m, n, a0, d):
    return sum((F(a0) + F(d) * (i - 1)) ** m for i in range(1, n + 1))


# ------------------------------------------------------ verification
def _primes_upto(n):
    sieve = bytearray([1]) * (n + 1)
    sieve[:2] = b"\x00\x00"
    for i in range(2, int(n ** 0.5) + 1):
        if sieve[i]:
            sieve[i * i::i] = bytearray(len(sieve[i * i::i]))
    return [i for i, v in enumerate(sieve) if v]


def _frac(r):
    """sympy Rational -> Fraction"""
    return F(int(r.p), int(r.q))


def verify(M=300, seed=1):
    import sympy as sp
    ok_all = True

    def report(name, ok, extra=""):
        nonlocal ok_all
        ok_all &= bool(ok)
        print(f"[{'PASS' if ok else 'FAIL'}] {name} {extra}")

    # 1. row versus the definition binom(m,k)/(k+1) and the ratio rule
    bad = []
    for m in range(3, M + 1):
        defn = [F(math.comb(m, k), k + 1) for k in range(1, m - 1)]
        ratio = [F(m, 2)]
        for k in range(1, m - 2):
            ratio.append(ratio[-1] * F(m - k, k + 2))
        if not (generator_row(m) == defn == ratio):
            bad.append(m)
    report("row = binom(m,k)/(k+1) = ratio rule", not bad,
           f"(m=3..{M}; bad: {bad[:5]})")

    # 2. normalized constants versus Bernoulli numbers
    C = rbef_constants(M)
    wrong = [m for m in range(2, M + 1)
             if C[m] != _frac(sp.bernoulli(m))]
    report("C_m = B_m, 2<=m<=M", not wrong, f"(M={M}; bad: {wrong[:5]})")
    report("C_1 = a0 - d/2 = +1/2 (by construction)", C[1] == F(1, 2))

    # 3. odd constants vanish although nothing forces them to
    odd = [m for m in range(3, M + 1, 2) if C[m] != 0]
    report("odd C_m = 0 (computed)", not odd,
           f"({len(range(3, M + 1, 2))} values)")

    # 4. von Staudt-Clausen
    vsc_bad = []
    for m in range(2, M + 1, 2):
        ps = [p for p in _primes_upto(m + 1) if m % (p - 1) == 0]
        if (C[m] + sum(F(1, p) for p in ps)).denominator != 1:
            vsc_bad.append(m)
    report("von Staudt-Clausen", not vsc_bad)

    # 5. closed form versus direct sums for general a0, d
    rng = random.Random(seed)
    Mp = min(M, 40)
    bad_ps = 0
    for _ in range(25):
        a0 = F(rng.randint(-9, 9), rng.randint(1, 5))
        d = F(rng.choice([x for x in range(-7, 8) if x]),
              rng.randint(1, 4))
        Cg = rbef_constants(Mp, a0, d)
        for m in range(1, Mp + 1):
            for n in range(1, m + 3):     # m+2 points fix the polynomial
                if (power_sum_from_rbef(m, n, a0, d, Cg)
                        != direct_power_sum(m, n, a0, d)):
                    bad_ps += 1
    report("closed form = direct sum", bad_ps == 0,
           f"(25 pairs, m=1..{Mp}, n=1..m+2; failures: {bad_ps})")

    # 6. C_m(a0,d) = d^m B_m(a0/d)
    x = sp.Symbol("x")
    rel_bad = 0
    for _ in range(6):
        a0 = F(rng.randint(1, 9), rng.randint(1, 4))
        d = F(rng.randint(1, 7), rng.randint(1, 3))
        Cg = rbef_constants(25, a0, d)
        y = sp.Rational(a0.numerator, a0.denominator) / \
            sp.Rational(d.numerator, d.denominator)
        dd = sp.Rational(d.numerator, d.denominator)
        for m in range(2, 26):
            if Cg[m] != _frac(dd ** m * sp.bernoulli(m, x).subs(x, y)):
                rel_bad += 1
    report("C_m = d^m B_m(a0/d)", rel_bad == 0, f"(failures: {rel_bad})")

    print("\nOVERALL:", "all checks passed" if ok_all else "FAILURES")
    return ok_all


if __name__ == "__main__":
    verify(int(sys.argv[1]) if len(sys.argv) > 1 else 300)
\end{verbatim}
}

\end{document}